\documentclass[11 pt]{amsart}

\usepackage{lineno,hyperref}
\usepackage{amsmath,amssymb}
\usepackage{lineno}
\usepackage{centernot}
%\modulolinenumbers[5]
\theoremstyle{plain}
\newtheorem{thm}{Theorem}[section]
\newtheorem{lemma}[thm]{Lemma}
\newtheorem{cor}[thm]{Corollary}
\newtheorem{prop}[thm]{Proposition}

\theoremstyle{definition}
\newtheorem{defn}[thm]{Definition}
\newtheorem{example}[thm]{Example}

\numberwithin{equation}{section}

\newcommand{\interior}[1]{{\kern0pt#1}^{\mathrm{o}}}
 % (or \bar{#1})

%\renewcommand{\baselinestretch}{1}

%%%%%%%%%%%%%%%%%%%%%%%
%% Elsevier bibliography styles
%%%%%%%%%%%%%%%%%%%%%%%
%% To change the style, put a % in front of the second line of the current style and
%% remove the % from the second line of the style you would like to use.
%%%%%%%%%%%%%%%%%%%%%%%

%% Numbered
%\bibliographystyle{model1-num-names}

%% Numbered without titles
%\bibliographystyle{model1a-num-names}

%% Harvard
%\bibliographystyle{model2-names.bst}\biboptions{authoryear}

%% Vancouver numbered
%\usepackage{numcompress}\bibliographystyle{model3-num-names}

%% Vancouver name/year
%\usepackage{numcompress}\bibliographystyle{model4-names}\biboptions{authoryear}

%% APA style
%\bibliographystyle{model5-names}\biboptions{authoryear}

%% AMA style
%\usepackage{numcompress}\bibliographystyle{model6-num-names}

%% `Elsevier LaTeX' style
%\bibliographystyle{elsarticle-num}
%%%%%%%%%%%%%%%%%%%%%%%

\begin{document}

%\linenumbers

\title{Pointwise semicommutative rings}

%----------Author 1
\author[Sanjiv Subba]{Sanjiv Subba $^\dagger$}

\address{$^\dagger$Department of Mathematics\\ National Institute Of Technology Meghalaya\\ Shillong 793003\\ India}
\email{sanjivsubba59@gmail.com}
%----------Author 2
\author[Tikaram Subedi]{Tikaram Subedi  {$^{\dagger *}$}}
\address{$^{\dagger * }$Department of Mathematics\\  National Institute Of Technology  Meghalaya\\ Shillong 793003\\ India}
\email{tikaram.subedi@nitm.ac.in}

\author[A. M. Buhphang]{A. M. Buhphang  {$^{\dagger \dagger }$}}
\address{$^{\dagger\dagger  }$Department of Mathematics\\ North-Eastern Hill University\\ Shillong 793022\\ India}
\email{ardeline@nehu.ac.in}
%$\thanks{*Corresponding author}$
%----------classification, keywords, date
\subjclass[2010]{Primary 16U80; Secondary 16S34, 16S36}.

\keywords{ Pointwise semicommutative rings, semicommutative rings}

\begin{abstract}
We call a ring $R$ pointwise semicommutative if for any element $a\in R$ either $l(a)$ or $r(a)$ is an ideal of $R$. A class of pointwise semicommutative rings is a strict generalization of semicommutative rings. Since reduced rings are pointwise semicommutative, this paper studies sufficient conditions for pointwise semicommutative rings to be reduced. For a pointwise semicommutative ring $R$, $R$ is strongly regular if and only if $R$ is left SF; $R$ is exchange if and only if $R$ is clean; if $R$ is semiperiodic then $R/J(R)$ is commutative.

\end{abstract}

\maketitle

\section{Introduction}

Throughout this paper, unless otherwise mentioned, all rings considered are associative with identity, $R$ represents a ring, and all modules are unital. For any $w\in R$, the notations $r(w)~(l(w))$ represents the right (left) annihilator of $w$. We write $C(R),~P(R),~ J(R),~N(R)$, $E(R)$, $Z(_RR)$ and $U(R)$ respectively, for the set of all central elements, the prime radical, the Jacobson radical, the set of all nilpotent elements, the set of all idempotent elements, the left singular ideal of $R$ and the group of units of $R$. Recall that $R$ is said to be:
\begin{enumerate}
	\item \textit{reduced} if $N(R)=0$.
	\item \textit{reversible} (\cite{ext}) if $wh=0$ implies $hw=0$ for any $w,~h\in R$.
	\item  \textit{semicommutative} (\cite{ext}) if for each $w\in R$, $r(w)$ is an ideal of $R$.

	\item  \textit{strongly regular} (\cite{qsr})  if for each $w\in R$, $w\in w^2R$.
	
	\item  \textit{left (right) weakly regular} (\cite{wreg})  if $w\in RwRw~ (w\in wRwR)$ for any $w\in R$.
	\item \textit{left (right) quasi duo} (\cite{qsr}) if every maximal left (right) ideal of $R$ is an ideal of $R$.
	
	\quad Let $ME_l(R)=\{e\in E(R)~|~Re$~ is~ a~minimal~left~ideal~of~$R\}$.  $R$ is called \textit{left min-abel} if for any $e\in ME_l(R)$ $re=ere$ for all $r\in R$.  $R$ is called left $MC2$ if $aRe=0$ implies $eRa=0$ for any $a\in R,~e\in ME_l(R)$. According to \cite{nci}, $R$ is said to be $NCI$ if $N(R)=0$ or $N(R)$ contains a non-zero ideal of $R$. $R$ is said to be $NI$ if $N(R)$ is an ideal of $R$, and $R$ is 2-primal if $N(R)=P(R)$. Obviously, NI rings are NCI; nevertheless, the converse is not true (by \cite[Example 2.5]{nci}). $R$ is \textit{directly finite}   if $wh=1$ implies $hw=1$ for any $h,w\in R$.
	
	\quad 
	Let $\Psi:R\rightarrow R$ be an automorphism of $R$. $R[x;\Psi]$ is the ring of polynomials over $R$ with respect to usual polynomial addition and multiplication which is defined by the rule: $xa=\Psi(a)x$. $R[x;\Psi]$ is called skew polynomial ring of $R$.
	
	\quad   Over the past several years, semicommutative rings and their generalizations have been studied extensively by many authors. In a semicommutative ring $R$, both the left and the right annihilator of every element of $R$ are ideals of $R$. This motivates us to investigate a ring $R$ in which either the left or the right annihilator (not necessarily both) of any element of $R$ is an ideal of $R$. This paper studies such a class of rings.
\end{enumerate} 

\section{Main Results}

\begin{defn}
	We call a ring $R$ \emph{pointwise semicommutative} if for any $w\in R$,  either $l(w)$ or $r(w)$ is an ideal of $R$.
\end{defn}

It is evident that semicommutative rings are pointwise semicommutative. However, the following example shows that the converse need not be true.
\begin{example} \label{exmpl}
	Let $R=\left[\begin{array}{{rr}}
		\mathbb{Z}_2 & \mathbb{Z}_2\\
		0 & \mathbb{Z}_2
	\end{array} \right]$ and
	$a_0=\left[\begin{array}{{rr}}
		0 & 0\\
		0 & 0
	\end{array} \right]$, $a_1=\left[\begin{array}{{rr}}
		1 & 0\\
		0 & 1
	\end{array} \right]$, $a_2=\left[\begin{array}{{rr}}
		1 & 1\\
		0 & 1
	\end{array} \right]$, $a_3=\left[\begin{array}{{rr}}
		1 & 0\\
		0 & 0
	\end{array} \right]$, $a_4=\left[\begin{array}{{rr}}
		0 & 1\\
		0 & 0
	\end{array} \right]$, $a_5=\left[\begin{array}{{rr}}
		0 & 0\\
		0 & 1
	\end{array} \right]$,
	$a_6=\left[\begin{array}{{rr}}
		1 & 1\\
		0 & 0
	\end{array} \right]$, $a_7=\left[\begin{array}{{rr}}
		0 & 1\\
		0 & 1
	\end{array} \right]$. Observe that $l(a_0)=R$, $l(a_1)=l(a_2)=0$,  $l(a_3)=\left[\begin{array}{{rr}}
		0 & \mathbb{Z}_2\\
		0 & \mathbb{Z}_2
	\end{array} \right]$, $l(a_4)=\left[\begin{array}{{rr}}
		0 & \mathbb{Z}_2\\
		0 & \mathbb{Z}_2
	\end{array} \right]$, $r(a_5)=\left[\begin{array}{{rr}}
		\mathbb{Z}_2 & \mathbb{Z}_2\\
		0 & 0
	\end{array} \right]$, $l(a_6)=\left[\begin{array}{{rr}}
		0 & \mathbb{Z}_2\\
		0 & \mathbb{Z}_2
	\end{array} \right]$, $r(a_7)=\left[\begin{array}{{rr}}
		\mathbb{Z}_2 & \mathbb{Z}_2\\
		0 & 0
	\end{array} \right]$ are ideals. Thus, $R$ is pointwise semicommutative. Clearly, $R$ is not semicommutative.
\end{example}	

\begin{prop}\label{dlm}
	Let R be a pointwise semicommutative ring. Then:
	\begin{enumerate}
		\item \label{df} R is directly finite.
		\item \label{lma} R is left min-abel.
		
	\end{enumerate}
\end{prop}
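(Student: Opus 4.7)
For part (1), the plan is to examine the single element $a = 1 - hw$ and show $a = 0$ by applying pointwise semicommutativity to $a$ itself. A short computation using $wh = 1$ gives $wa = w - whw = 0$ and $ah = h - hwh = 0$, so $w \in l(a)$ and $h \in r(a)$. By hypothesis, either $l(a)$ or $r(a)$ is a two-sided ideal of $R$. If $l(a)$ is an ideal, then $w \in l(a)$ forces $wh \in l(a)$, i.e.\ $1 \in l(a)$, so $a = 1 \cdot a = 0$. Symmetrically, if $r(a)$ is an ideal, then $h \in r(a)$ gives $wh = 1 \in r(a)$, and again $a = 0$. Either branch delivers $hw = 1$. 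I do not expect any obstacle here beyond choosing the right auxiliary element.

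For part (2), fix an idempotent $e \in ME_l(R)$ and apply pointwise semicommutativity to $e$. Since $(1-e)e = 0 = e(1-e)$, the element $1-e$ belongs to both $l(e)$ and $r(e)$, and the proof splits along whichever annihilator is known to be an ideal. If $l(e)$ is an ideal, one gets $(1-e)r \in l(e)$ for every $r \in R$, hence $(1-e)re = 0$, which is exactly the desired identity $re = ere$.

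The substantive case is the one where only $r(e)$ is known to be an ideal. From $r(1-e) \in r(e)$ for all $r$ one extracts only the right-handed identity $er = ere$, whereas left min-abelness is a left-handed statement; this asymmetry is where I expect the main difficulty. To bridge it, the plan is to use the minimality of $Re$. Suppose for contradiction that $(1-e)re \neq 0$ for some $r$. Then $R(1-e)re$ is a nonzero left submodule of $Re$, so by minimality it equals $Re$, which yields $e = s(1-e)re$ for some $s \in R$. The key observation is that the right-handed identity gives $es = ese$, whence $es(1-e) = ese - ese^2 = 0$. Substituting this into $e = e^2 = es(1-e)re$ forces $e = 0$, contradicting $e \in ME_l(R)$. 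Hence $(1-e)re = 0$ for every $r$, establishing $re = ere$.
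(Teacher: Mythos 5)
Your proof is correct, but both parts run along genuinely different lines from the paper's. For (1), the paper applies the hypothesis to the square-zero element $k=h-h^2w$, deduces $kRk=0$, and recovers $hw=1$ from the expansion of $k(w-h^2w)k=0$; you instead apply it directly to the idempotent $a=1-hw$, observing $w\in l(a)$ and $h\in r(a)$, so that whichever annihilator is an ideal already contains $1=wh$ and forces $a=0$. Your version is shorter and avoids the triple-product computation entirely. For (2), the paper again works with a square-zero element, $h=we-ewe$: assuming $h\neq 0$, minimality gives $Rh=Re$, while $hRh=0$ yields $Re=ReRe=RhRh=0$, a contradiction. You instead apply the hypothesis to $e$ itself: the case where $l(e)$ is an ideal gives $(1-e)re=0$ immediately, and in the case where only $r(e)$ is an ideal you obtain the right-handed identity $er=ere$ and then convert it to the left-handed one via minimality ($e=s(1-e)re$ would force $e=es(1-e)re=0$). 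The paper's argument is more uniform in that it treats both annihilator cases at once through the single consequence $hRh=0$ for square-zero $h$, whereas your case split isolates exactly where the left/right asymmetry of the definition bites and resolves it with a separate, clean use of minimality. Both arguments are complete; no gaps.
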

\begin{proof}
	\begin{enumerate}
		\item Suppose $w,h\in R$ with $wh=1$. Take $k=h-h^2w$. Since $R$ is pointwise semicommutative and $k^2=0$, $0=k(w-h^2w)k =(h-h^2w)(w-h^2w)(h-h^2w)$. This implies that $hw=1$.
		
		\item  Let $e\in ME_l(R)$ and $w\in R$. Take $h=we-ewe$. If possible, assume that $h\neq 0$. Clearly, $he=h$ and $h^2=0$.  Observe that $0\neq Rh\subseteq Re$. Since $e\in ME_l(R)$, $Rh=Re$. As $R$ is pointwise semicommutative and $h^2=0$, $hRh=0$. Thus, $0=RhRh=ReRe=Re$, a contradiction. Thus, $h=0$.
		
	\end{enumerate}
\end{proof}

\begin{prop}
	Let $R$ be a ring. Then R is a domain if and only if R is prime and pointwise semicommutative.
\end{prop}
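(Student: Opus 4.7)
The forward direction is immediate: in a domain every nonzero element has trivial left and right annihilator, while $l(0)=r(0)=R$, so both one-sided annihilators are always ideals, and a domain is easily prime. I would dispose of this in one sentence and concentrate on the converse.

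For the converse, suppose $R$ is prime and pointwise semicommutative and assume $ab=0$; my goal is $a=0$ or $b=0$. The key preliminary observation I would establish is: if $w\ne 0$ and $l(w)$ is an ideal of $R$, then $l(w)=0$ (symmetrically for $r(w)$). This follows because $l(w)\cdot w=0$ together with $l(w)R\subseteq l(w)$ gives $l(w)\cdot RwR\subseteq l(w)\,w\,R=0$, and since $w\in RwR$ forces $RwR\ne 0$, primeness yields $l(w)=0$.

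With this observation, I would run a case analysis on which annihilators of $a$ and $b$ are known to be ideals. If $r(a)$ is an ideal and $a\ne 0$, the observation gives $r(a)=0$, so $b\in r(a)$ forces $b=0$; symmetrically, if $l(b)$ is an ideal and $b\ne 0$, we obtain $a=0$. The only remaining configuration is that neither $r(a)$ nor $l(b)$ is assumed to be an ideal, so pointwise semicommutativity only guarantees that $l(a)$ and $r(b)$ are ideals, whence $l(a)=0$ and $r(b)=0$. To dispatch this residual case I would pass to the element $ba$: if $ba=0$, then $b\in l(a)=0$; otherwise $ba\ne 0$, and the identities $a(ba)=(ab)a=0$ and $(ba)b=b(ab)=0$ place $a\in l(ba)$ and $b\in r(ba)$. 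Applying the preliminary observation to whichever of $l(ba), r(ba)$ happens to be an ideal then forces $a=0$ or $b=0$.

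The only non-routine step is the residual case where pointwise semicommutativity supplies the ``wrong'' side of the annihilator for both $a$ and $b$; this is the main obstacle, and the trick of considering the product $ba$ and exploiting the symmetric vanishing of $a(ba)$ and $(ba)b$ is what resolves it.
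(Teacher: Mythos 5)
Your proof is correct, but it takes a genuinely different route from the paper's. The paper's argument never discusses which annihilator happens to be an ideal: it isolates the single fact that in a pointwise semicommutative ring every square-zero element $k$ satisfies $kRk=0$ (because $k$ lies in both $l(k)$ and $r(k)$, so whichever of the two is an ideal absorbs $kR$ or $Rk$), and then lets primeness kill such $k$. Applied to $k=hw$ (square-zero since $wh=0$) this gives $hw=0$, whence every $wrh$ is square-zero, so $wRh=0$ and primeness finishes. Your proof instead works at the level of annihilator ideals: you show that in a prime ring a nonzero element whose left (resp.\ right) annihilator is two-sided must have that annihilator equal to zero, and then you run a case analysis, resolving the one bad configuration (only $l(a)$ and $r(b)$ known to be ideals) by passing to the element $ba$. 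Both arguments are complete; the case analysis and the $ba$ trick are sound, and your use of primeness on the product of ideals $l(w)\cdot RwR$ is legitimate since the ring has an identity, so $w\in RwR$. What the paper's approach buys is uniformity (no cases) and a reusable lemma --- the identity $kRk=0$ for $k^2=0$ is the engine behind several other results in the paper (e.g.\ direct finiteness, left min-abelness, and the 2-primal result). What your approach buys is a sharper structural statement about prime rings: a one-sided annihilator that happens to be two-sided must vanish on nonzero elements, which makes the role of primeness more transparent. One small presentational point: you should state explicitly at the outset of the converse that you may assume $a\neq 0$ and $b\neq 0$, since your deductions $l(a)=0$ and $r(b)=0$ in the residual case rely on it.
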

\begin{proof}
	The necessary part is obvious. Conversely, assume that $R$ is a prime and pointwise semicommutative ring and $w,h\in R$ be such that $wh=0$. Since $R$ is pointwise semicommutative and $(hw)^2=0$, $hwRhw=0$. By hypothesis, $hw=0$. So for any $r\in R$, $(wrh)^2=0$ which further implies that $wrhRwrh=0$, that is, $wrh=0$. As $R$ is prime, $w=0$ or $h=0$.
\end{proof}

\quad It is well known that the ring  \\ $R_n=
\left \lbrace
\left[\begin{array}{lccccr}
	a & a_{12} & \dots  & a_{1(n-1)} & a_{1n}\\
	0 & a & \dots & a_{2(n-1)} & a_{2n}\\
	\vdots & \vdots &\ddots & \vdots & \vdots \\
	0 & 0 & \dots & a &a_{(n-1)n}\\
	0 & 0 & \dots & 0 & a \\
\end{array}
\right ]|a, a_{ij}\in R,~ i<j\right \rbrace$
is semicommutative whenever $R$ is reduced and $n=3$. However, $R_n$ is not semicommutative for $n\geq 4$ even if $R$ is reduced (see \cite[Example 1.3]{ext}). So one might suspect whether $R_n$ is pointwise semicommutative for $n\geq 4$ whenever $R$ is reduced. Nevertheless, the following example obliterates the possibility.

\begin{example}\label{R4}
	Let $R=\mathbb{Z}_6$, $R_4=\left\{\left[\begin{array}{rrrr}
		a & b & c & d\\
		0 & a & e & f\\
		0 & 0 & a & g\\
		0 & 0 & 0 & a
	\end{array}\right]|a,b,c,d,e,f,g\in R \right\} $, where $\mathbb{Z}_6$ is the ring of integers modulo 6. Take $A=\left[ \begin{array}{rrrr}
		2 & 1 & 1 & 1\\
		0 & 2 & 2 & 1\\
		0 & 0 & 2 & 1\\
		0 & 0 & 0 & 2
	\end{array}\right]\in R_4$. Note that $B=\left[ \begin{array}{rrrr}
		0 & 3 & 3 & 3\\
		0 & 0 & 0 & 3\\
		0 & 0 & 0 & 3\\
		0 & 0 & 0 & 0
	\end{array}\right]\in r(A)$. Let $C=\left[ \begin{array}{rrrr}
		1 & 5 & 5 & 2\\
		0 & 1 & 3 & 1\\
		0 & 0 & 1 & 5\\
		0 & 0 & 0 & 1
	\end{array}\right] \in R_4$. Then  $ACB=\left[ \begin{array}{rrrr}
		0 & 0 & 0 & 3\\
		0 & 0 & 0 & 0\\
		0 & 0 & 0 & 0\\
		0 & 0 & 0 & 0
	\end{array}\right]\neq 0$. Now, observe that $E=\left[ \begin{array}{rrrr}
		0 & 3 & 3 & 3\\
		0 & 0 & 0 & 3\\
		0 & 0 & 0 & 3\\
		0 & 0 & 0 & 0
	\end{array}\right]\in l(A)$. Take $F=\left[ \begin{array}{rrrr}
		5 & 5 & 5 & 5\\
		0 & 5 & 3 & 5\\
		0 & 0 & 5 & 3\\
		0 & 0 & 0 & 5
	\end{array}\right]\in R_4$.
	Then $EFA=\left[ \begin{array}{rrrr}
		0 & 0 & 0 & 3\\
		0 & 0 & 0 & 0\\
		0 & 0 & 0 & 0\\
		0 & 0 & 0 & 0
	\end{array}\right]\neq 0$. Thus, neither $r(A)$ nor $l(A)$ is an ideal of $R_4$, that is, $R_4$ is not a pointwise semicommutative ring.
\end{example}.

\begin{prop}\label{red}
	Let $R$ be a pointwise semicommutative ring. Then $R$ is NCI.
	
\end{prop}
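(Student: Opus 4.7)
The plan is to argue by cases on whether $N(R)$ is zero. If $N(R)=0$ the statement holds trivially, so assume there is a nonzero nilpotent element $a\in R$, and let $n\geq 2$ be the smallest integer with $a^n=0$. Setting $b=a^{n-1}$ gives a nonzero element satisfying $b^2=0$. The candidate ideal to place inside $N(R)$ is the principal (two-sided) ideal $RbR$ generated by $b$; note that $b\in RbR$ (using the identity of $R$), so $RbR\neq 0$, and $RbR$ is visibly a two-sided ideal. What remains is to show $RbR\subseteq N(R)$.

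Here I use pointwise semicommutativity of $R$ applied to $b$: either $r(b)$ or $l(b)$ is an ideal. In both cases, since $b^2=0$, we have $b\in r(b)\cap l(b)$, so whichever of these annihilators is an ideal must contain the principal ideal $RbR$ generated by $b$. If $r(b)$ is an ideal, then $RbR\subseteq r(b)$; applying this to the element $rb\in RbR$ (for arbitrary $r\in R$) gives $b(rb)=brb=0$, so $bRb=0$. Symmetrically, if $l(b)$ is an ideal, applying $RbR\subseteq l(b)$ to $br\in RbR$ yields $(br)b=brb=0$, again giving $bRb=0$.

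With $bRb=0$ in hand, a direct computation finishes the argument: for any $x=\sum_i r_ibs_i\in RbR$,
\[
x^2=\sum_{i,j}r_i b s_i r_j b s_j=\sum_{i,j}r_i\bigl(b(s_ir_j)b\bigr)s_j=0,
\]
since each inner factor $b(s_ir_j)b$ lies in $bRb=0$. Hence every element of $RbR$ squares to zero, so $RbR\subseteq N(R)$, exhibiting a nonzero ideal of $R$ contained in $N(R)$ and proving that $R$ is NCI.

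The only subtle step is the derivation of $bRb=0$ from the hypothesis; the key observation making it work is that in a ring with identity, $b$ itself lies in the principal ideal $RbR$, so the ideal $r(b)$ (or $l(b)$) not only contains $b$ but also all elements $rb$ (respectively $br$). Once $bRb=0$ is secured, the nilpotency of every element of $RbR$ is a formal calculation, so that is not an obstacle.
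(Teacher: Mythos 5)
Your proof is correct and follows essentially the same route as the paper: reduce to a nonzero square-zero element $b=a^{n-1}$, use pointwise semicommutativity to deduce $bRb=0$, and conclude that the nonzero ideal $RbR$ is nil. The only cosmetic difference is that you apply the hypothesis to $b$ itself, while the paper applies it to $a$ and then multiplies by powers of $a$ to reach the same identity $a^{n-1}Ra^{n-1}=0$.
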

\begin{proof}
	
	Let $R$ be a pointwise semicommutative ring. Suppose $N(R)\neq 0$. Then there exists $w~(\neq 0)\in N(R)$ such that $w^n=0$ for some integer $n\geq 2$ and $w^{n-1}\neq 0$. Since $R$ is pointwise semicommutative, either $w^{n-1}Rw=0$ or $wRw^{n-1}=0$. So $w^{n-1}Rw^{n-1}=0$. Hence $Rw^{n-1}R$ is a non-zero nilpotent ideal of $R$. Thus, $R$ is $NCI$.

\end{proof}
Observe that $R_4$ (in Example \ref{R4}) is NCI. Hence the converse is not true.

\begin{prop}
	Let $\{R_i \}_{i\in \Delta}$ be a class of rings and $\Delta$ an index set. If $R=\Pi_{i\in \Delta}R_i$ is pointwise semicommutative, then $R_i$ is pointwise semicommutative for each $i\in \Delta$.
\end{prop}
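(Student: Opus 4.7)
The plan is to transfer the pointwise semicommutative property from the product $R = \prod_{i\in\Delta} R_i$ down to each factor $R_j$ by embedding an arbitrary element of $R_j$ into $R$ in a way that preserves annihilators.

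Fix $j\in\Delta$ and let $a\in R_j$. The key construction is to form $\tilde{a}\in R$ whose $j$-th coordinate is $a$ and whose $i$-th coordinate equals $1_{R_i}$ for every $i\neq j$. With this choice of embedding, a direct coordinate-wise computation shows that
\[
 r_R(\tilde a) = \{(b_i)\in R : b_j\in r_{R_j}(a),\ b_i=0 \text{ for all } i\neq j\},
\]
and the analogous identification holds for $l_R(\tilde a)$. The reason for choosing $1$'s (rather than $0$'s) in the other coordinates is precisely to force the non-$j$ coordinates of any annihilator element to vanish, so that these annihilators become faithful copies of the annihilators in $R_j$.

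Next I apply the hypothesis: one of $l_R(\tilde a)$, $r_R(\tilde a)$ is an ideal of $R$. Suppose, say, that $r_R(\tilde a)$ is an ideal (the other case is symmetric). To show $r_{R_j}(a)$ is an ideal of $R_j$, pick $x\in r_{R_j}(a)$ and $y\in R_j$. Lift each to an element of $R$ by placing it in the $j$-th coordinate with $0$'s elsewhere, producing $\tilde x,\tilde y\in R$ with $\tilde x\in r_R(\tilde a)$. Since $r_R(\tilde a)$ is a two-sided ideal of $R$, both $\tilde y\tilde x$ and $\tilde x\tilde y$ lie in $r_R(\tilde a)$. Reading off the $j$-th coordinates gives $yx, xy\in r_{R_j}(a)$, proving that $r_{R_j}(a)$ is an ideal of $R_j$. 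The case where $l_R(\tilde a)$ is the ideal proceeds identically to show $l_{R_j}(a)$ is an ideal.

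Hence for every $a\in R_j$, either $l_{R_j}(a)$ or $r_{R_j}(a)$ is an ideal of $R_j$, so $R_j$ is pointwise semicommutative. The step that requires the most care is the choice of embedding $\tilde a$: with $0$'s in place of $1$'s, the annihilator $r_R(\tilde a)$ becomes too large (it ignores the non-$j$ coordinates entirely), and the bookkeeping between annihilators in $R$ and in $R_j$ becomes clumsy; using $1$'s on the coordinates $i\neq j$ removes this obstruction and makes the correspondence transparent.
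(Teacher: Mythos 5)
Your proof is correct and follows essentially the same route as the paper: embed an element of $R_j$ into the product, use the hypothesis that one of its annihilators in $R$ is an ideal, and project back to the $j$-th coordinate. The only cosmetic difference is that you pad the other coordinates with $1$'s instead of $0$'s; the paper's padding with $0$'s works just as cleanly, since one only needs the lifted element to lie in the relevant annihilator and then reads off the $j$-th coordinate of the products, so the ``too large'' annihilator is not actually an obstruction.
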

\begin{proof}
	Let $a_j\in R_j,~j\in \Delta$. Suppose $l\left((0,...,0,a_j,0,...)\right)$ is an ideal of $R=\Pi_{i\in \Delta}R_i$ and $x_j\in l(a_j)$. Note that $(0,0,...,0,x_j,0...)\in l\left((0,...,0,a_j,0,...)\right)$. So $(0,0,....,0,x_j,0,..)(r_i)_{i\in \Delta}(0,...,a_j,0,...)=0$ for any $(r_i)_{i\in \Delta} \in \Pi_{i\in \Delta}R_i$. So $x_jr_j\in l(a_j)$ for all $r_j\in R_j$. Thus, $l(a_j)$ is an ideal of $R_j$. Similarly, $r(a_j)$ is an ideal of $R_j$ whenever $r\left((0,...,0,a_j,0,...)\right)$ is an ideal of $R$. Therefore, for each $j\in \Delta$, $R_j$ is pointwise semicommutative.
\end{proof}
However, the converse is not true (see the following example).
\begin{example}
	Let $R_i=\left[\begin{array}{rr}
		\mathbb{Z}_2 & \mathbb{Z}_2 \\
		0 & \mathbb{Z}_2 \\
	\end{array} \right] $, $i\in \{1,2\}$. Then $R_i$ is pointwise semicommutative (see Example \ref{exmpl}). Take $A=\left(\left[\begin{array}{rr}
		0 & 0 \\
		0 & 1 \\
	\end{array} \right], \left[\begin{array}{rr}
		1 & 1 \\
		0 & 0 \\
	\end{array} \right]\right) \in R_1\times R_2$. Then $l(A)=\left[\begin{array}{rr}
		\mathbb{Z}_2 & 0 \\
		0 & 0 \\
	\end{array} \right] \times \left[\begin{array}{rr}
		0 & \mathbb{Z}_2 \\
		0 & \mathbb{Z}_2 \\
	\end{array} \right] $. Note that  $X=\left(\left[\begin{array}{rr}
		1 & 0 \\
		0 & 0 \\
	\end{array} \right], \left[\begin{array}{rr}
		0 & 1 \\
		0 & 1 \\
	\end{array} \right]\right) \in l(A)$ and take $Y=\left(\left[\begin{array}{rr}
		1 & 1 \\
		0 & 1 \\
	\end{array} \right], \left[\begin{array}{rr}
		1 & 1 \\
		0 & 1 \\
	\end{array} \right]\right) \in R_1\times R_2$. Then \\ $XY=\left(\left[\begin{array}{rr}
		1& 1 \\
		0 & 0 \\
	\end{array} \right], \left[\begin{array}{rr}
		0 & 1 \\
		0 & 1 \\
	\end{array} \right]\right) \notin l(A)$. Thus, $l(A)$ is not an  ideal of $R_1\times R_2$. Observe that \\ $r(A)=\left\{\left(\left[\begin{array}{rr}
		x & y \\
		0 & 0 \\
	\end{array} \right], \left[\begin{array}{rr}
		0 & z \\
		0 & z \\
	\end{array} \right]\right)| x,y,z\in \mathbb{Z}_2 \right\}$. So, $P=\left(\left[\begin{array}{rr}
		1& 1 \\
		0 & 0 \\
	\end{array} \right], \left[\begin{array}{rr}
		0 & 1 \\
		0 & 1 \\
	\end{array} \right]\right) \in r(A)$. Then, $YP=\left(\left[\begin{array}{rr}
		1& 1 \\
		0 & 0 \\
	\end{array} \right], \left[\begin{array}{rr}
		0 & 0 \\
		0 & 1 \\
	\end{array} \right]\right) \notin r(A)$. So $r(A)$ is not an ideal of $R_1\times R_2$. Therefore, $R_1\times R_2$ is not pointwise semicommutative.
\end{example}

\begin{prop}\label{n2}
	Let $R$ be a pointwise semicommutative, and every nilpotent element is of index $\leq 2$. Then $R$ is 2-primal.
\end{prop}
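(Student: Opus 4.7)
The plan is to show the nontrivial inclusion $N(R)\subseteq P(R)$; the reverse inclusion $P(R)\subseteq N(R)$ holds in every ring.

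Fix $a\in N(R)$. By hypothesis $a^2=0$. The first step is to observe that whether $l(a)$ or $r(a)$ is the ideal supplied by pointwise semicommutativity, we obtain the same conclusion $aRa=0$. Indeed, $a^2=0$ means $a\in l(a)\cap r(a)$. If $l(a)$ is a two-sided ideal, then since $a\in l(a)$ we get $ar\in l(a)$ for every $r\in R$, i.e.\ $ara=0$, so $aRa=0$. Symmetrically, if $r(a)$ is a two-sided ideal, then $ra\in r(a)$ for every $r$, hence $ara=0$ and again $aRa=0$.

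The second step is routine: from $aRa=0$ it follows that the two-sided ideal $RaR$ satisfies $(RaR)^2=R\,aRa\,R=0$, so $RaR$ is a nilpotent ideal of $R$. Since every nilpotent ideal is contained in the prime radical, we get $a\in RaR\subseteq P(R)$. This gives $N(R)\subseteq P(R)$ and completes the proof.

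I do not anticipate any obstacle here; the only mildly subtle point is remembering that for the argument to go through in the $l(a)$-ideal case we use $a\in l(a)$ together with $l(a)$ being closed under right multiplication, and dually in the $r(a)$-ideal case. The hypothesis ``index $\leq 2$'' is used only to ensure $a^2=0$, which in turn is exactly what puts $a$ into both $l(a)$ and $r(a)$; without it one could not immediately deduce $aRa=0$ from knowing one of these annihilators is two-sided.
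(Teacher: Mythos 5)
Your proof is correct and follows essentially the same route as the paper: use $a^2=0$ to place $a$ in whichever annihilator is two-sided, deduce $aRa=0$, and conclude $a\in P(R)$. The paper states this more tersely (writing $wRw=0\subseteq P(R)$, hence $w\in P(R)$, by semiprimeness of the prime radical), while you justify the final step via the nilpotent ideal $RaR$; these are equivalent.
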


\begin{proof}
	It is obvious that $P(R)\subseteq N(R)$. Let $w\in N(R)$. Then $w^2=0$. Since $R$ is pointwise semicommutative, $wRw=0\subseteq P(R)$ and so $w\in P(R)$. Therefore $P(R)=N(R)$. 
\end{proof}

Following \cite{lft}, an element $w$ of a ring $R$ is said to be \textit{clean} if $w$ is a sum of a unit, and an idempotent of $R$, $w$ is said to be \textit{exchange} if there exist $e\in E(R)$ such that $e\in wR$ and $1-e\in (1-w)R$. $R$ is said to be clean if every element of $R$ is clean, and $R$ is said to be exchange if every element of $R$ is exchange. It is well known that clean rings are exchange. 

\begin{prop}
	Let R be a pointwise semicommutative exchange ring, then R is clean. 
\end{prop}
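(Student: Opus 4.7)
The key observation is that pointwise semicommutativity forces every idempotent of $R$ to be one-sided semicentral; thereafter the exchange property yields an explicit clean decomposition. First I would prove that for every $e\in E(R)$, either $(1-e)Re=0$ or $eR(1-e)=0$. Since $l(e)=R(1-e)$ and $r(e)=(1-e)R$, the hypothesis gives that one of these is a two-sided ideal. If $R(1-e)$ is an ideal, closure under right multiplication yields, for each $s\in R$, some $t\in R$ with $(1-e)s=t(1-e)$; right-multiplying by $e$ gives $(1-e)se=0$, hence $(1-e)Re=0$ and $e$ is left semicentral. The dual case produces right semicentrality of $e$.

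Now let $a\in R$ and apply the exchange property to obtain an idempotent $e$ and elements $x,y\in R$ with $e=ax$ and $1-e=(1-a)y$. The plan is to show that $u:=a+e-1$ is a unit of $R$, giving the clean decomposition $a=(1-e)+u$. Analyze $u$ through its Peirce decomposition with respect to $e$: the diagonal blocks are $eue=eae$ and $(1-e)u(1-e)=-(1-e)(1-a)(1-e)$, while the off-diagonal blocks are $eu(1-e)=ea(1-e)$ and $(1-e)ue=(1-e)ae$. The semicentrality established above forces one of these off-diagonal blocks to vanish, so $u$ is triangular in the Peirce decomposition, and $u$ is a unit in $R$ if and only if the two diagonal blocks are units in their respective corner rings.

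It remains to invert the diagonal blocks. For $eae$ in $eRe$, the identity $axe=(ax)e=e$ combined with $ex=exe$ (right-semicentral case) or $xe=exe$ (left-semicentral case) quickly yields $(eae)(exe)=e$. For $-(1-e)(1-a)(1-e)$ in $(1-e)R(1-e)$, I would first replace $y$ by $y(1-e)$ (still satisfying $(1-a)y(1-e)=(1-e)(1-e)=1-e$) so that $ye=0$; the candidate inverse is then $-(1-e)y(1-e)$, and the verification rests on the identity $e(1-a)(1-e)=0$ (right-semicentral case) or $(1-e)(1-a)e=0$ (left-semicentral case), each of which follows immediately from $ea=eae$ or $ae=eae$ respectively. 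Finally, direct finiteness of $R$ from Proposition~\ref{dlm}(\ref{df}) is inherited by the corner rings and upgrades these right inverses to two-sided inverses, so $u\in U(R)$ and $a$ is clean. The main obstacle is the bookkeeping across the two semicentrality cases and adjusting the exchange parameters so that the required cross terms cancel.
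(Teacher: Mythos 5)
Your argument is correct, and while it shares the paper's skeleton it certifies the key invertibility in a genuinely different way. Both proofs target the same decomposition $a=(1-e)+(a-(1-e))$, both extract from pointwise semicommutativity that the exchange idempotent $e$ is one-sided semicentral (in the paper this is phrased as ``$r(e)$ or $l(e)$ is an ideal'', which is exactly your $(1-e)Re=0$ or $eR(1-e)=0$), and both close with direct finiteness from Proposition \ref{dlm}(\ref{df}). The difference is in showing $a-(1-e)\in U(R)$. The paper computes the single product $(a-(1-e))(h-k)=1-(1-e)h-ek$ from the exchange data $e=ah$, $1-e=(1-a)k$, and uses the ideal hypothesis to show that $(1-e)h$ and $ek$ are either zero or lie in $J(R)$ (via $((1-e)hs)^2=0$, etc.), so the product is a unit and one application of direct finiteness to $R$ finishes. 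You instead pass to the Peirce decomposition, note that semicentrality kills one off-diagonal block so that $a+e-1$ sits in a formal triangular ring $\bigl(\begin{smallmatrix} eRe & 0\\ * & (1-e)R(1-e)\end{smallmatrix}\bigr)$ (or its transpose), and invert the diagonal corners explicitly; both corner computations you sketch do check out in each semicentrality case. Your route costs two extra standard facts that deserve a line each: that units of such a triangular ring are detected on the diagonal, and that direct finiteness descends to corner rings $eRe$ (true: $ab=e$ gives $(a+1-e)(b+1-e)=1$ in $R$). What it buys is a more structural explanation of the invertibility with no appeal to the Jacobson radical; the paper's version is shorter but more computational. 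There is no gap in your proposal, only more bookkeeping.
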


\begin{proof}
	Let $w\in R$. Then there exists $e\in E(R)$ satisfying $e\in wR$ and $1-e\in (1-w)R$. So $e=wh$ and $1-e=(1-w)k$ for some $h,~k\in R$ such that $h=he$, $k=k(1-e)$. Then $(w-(1-e))(h-k)=wh-wk-(1-e)h+(1-e)k=wh+(1-w)k-(1-e)h-ek=1-(1-e)h-ek$. Since $R$ is pointwise semicommutative, either $r(e)$ or $l(e)$ is an ideal of $R$. If $r(e)$ is an ideal of $R$, then $0=hes(1-e)=hs(1-e)$, that is, $(1-e)hs\in N(R)$ for all $s\in R$ and $0=ek(1-e)=ek$. So $(1-e)h,~ek\in J(R)$. If $l(e)$ is an ideal of $R$, then $(1-e)h=0$ and $ek\in J(R)$. Therefore, $1-(1-e)h-ek$ is unit and by Proposition \ref{dlm} (\ref{df}), $w-(1-e)$ is unit. Thus, $w$ is clean.
\end{proof}

A ring $R$ is said to be semiperiodic (\cite{spdic}) if for each $w\in R\setminus(J(R)\cup C(R))$, $w^q-w^p\in N(R)$ for some integers $p$ and $q$ of opposite parity.

\begin{lemma}\label{nsjr}
	Let $R$ be a pointwise semicommutative ring. If $R$ is semiperiodic then $N(R)\subseteq J(R)$.
\end{lemma}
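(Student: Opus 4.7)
Plan: Let $w \in N(R)$ with $w^n = 0$. To show $w \in J(R)$ it suffices to show that $1 - rw \in U(R)$ for every $r \in R$. Fix $r \in R$ and set $a = rw$; the proof runs by case analysis on where $a$ sits.

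\emph{Easy subcases.} If $a \in N(R)$, say $a^k = 0$, then $(1-a)(1+a+\cdots+a^{k-1})=1$ shows $1-a \in U(R)$. If $a \in J(R)$, then $1-a \in U(R)$ by the definition of the Jacobson radical. If $a \in C(R)$, then $a$ commutes with every element of $R$: in particular $a w = w a$, i.e., $rw^2 = wrw$, and also $a \cdot r^k = r^k \cdot a$. A short induction then gives $a^k = r^k w^k$ for every $k \geq 1$, so $a^n = r^n w^n = 0$; thus $a \in N(R)$ and we reduce to the first subcase.

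\emph{Main case.} The remaining possibility is $a \notin J(R) \cup C(R) \cup N(R)$, where the semiperiodic hypothesis yields integers $p > q \geq 1$ of opposite parity with $a^p - a^q \in N(R)$. Since any two polynomials in $a$ commute, raising the factorization $a^p - a^q = -a^q(1 - a^{p-q})$ to a power $t$ that annihilates this nilpotent element gives
\[
a^{qt}(1 - a^{p-q})^t = 0.
\]
I would then apply pointwise semicommutativity to $a^{qt}$: one of $l(a^{qt})$ or $r(a^{qt})$ is a two-sided ideal of $R$, and using commutativity of $a^{qt}$ with $(1-a^{p-q})^t$ to swap the annihilator side if necessary, this ideal contains $(1-a^{p-q})^t$ and hence the full two-sided ideal it generates. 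This strengthens the relation to $a^{qt} R (1 - a^{p-q})^t = 0$. Expanding $(1-a^{p-q})^t = 1 - a^{p-q}h(a)$ for an integer polynomial $h$ and iterating the rewriting $a^{qt} = a^{qt+(p-q)} h(a)$, one pushes powers of $a$ arbitrarily high. Because $a = rw$ lies in the left ideal $Rw$ of a nilpotent $w$, these sufficiently high powers fall into the nilpotent ideal $Rw^{n-1}R \subseteq P(R) \subseteq J(R)$ identified in Proposition \ref{red}, forcing $a^{qt} \in N(R)$ and therefore $a \in N(R)$, contradicting the case assumption.

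\emph{Main obstacle.} The delicate step is the last one: extracting honest nilpotency of $a$ from the commuting polynomial identity $a^{qt}(1 - a^{p-q})^t = 0$. The naive move, inverting $(1 - a^{p-q})^t$, is unavailable since $1 - a^{p-q}$ is not a priori a unit. Instead, the argument must carefully thread the polynomial identity through both the two-sided ideal structure provided by pointwise semicommutativity and the nilpotent ideal of Proposition \ref{red}, using the crucial membership $a \in Rw$ to force sufficiently high commuting powers of $a$ into $P(R)$. Getting this bookkeeping right in a non-commutative setting, with the integer polynomial $h(a)$ interacting with the non-commuting factor from $R$, is the principal technical hurdle.
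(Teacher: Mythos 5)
Your overall strategy (show $1-rw\in U(R)$ for every $r$, splitting into cases according to where $a=rw$ sits) is a reasonable framing, and the easy subcases are fine, including the observation that a central $a=rw$ satisfies $a^k=r^kw^k$ and is therefore nilpotent. The problem is the main case, and you have correctly identified where it lives but not closed it. From $a^{qt}(1-a^{p-q})^t=0$ you iterate to $a^{qt}=a^{qt+m(p-q)}h(a)^m$, and then assert that sufficiently high powers of $a=rw$ ``fall into the nilpotent ideal $Rw^{n-1}R$.'' There is no justification for this, and none is available: $(rw)^M=r(wr)(wr)\cdots(wr)w$ contains only isolated factors of $w$ separated by arbitrary ring elements, and nothing in the hypotheses aggregates them into the power $w^{n-1}$. (For $n=2$ the containment $(rw)^M\in RwR$ is trivial but useless unless $RwR$ is itself nilpotent, which again requires $wRw=0$-type identities you have not established for this $w$ inside the product.) Note also that the two-sided strengthening $a^{qt}R(1-a^{p-q})^t=0$ you extract from pointwise semicommutativity is never actually used afterwards — the iteration only uses the original one-sided identity — so the hypothesis that is supposed to power the argument plays no role in the step that fails. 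As written, the main case does not conclude, so the lemma is not proved.

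The paper closes exactly this gap by a different mechanism: instead of manipulating powers of $a=wx$, it invokes \cite[Lemma 2.3(iii)]{spdic}, which for $wx\notin J(R)\cup C(R)$ in a semiperiodic ring produces an idempotent $e$ with $(wx)^p=(wx)^pe$ and, crucially, $e=wy\in wR$. The nilpotency of $w$ itself (not of $a$) is then exploited through the telescoping identity $e=\sum_{i=1}^{k-1}ew^i(1-e)y^i$, and pointwise semicommutativity applied to the idempotent $e$ (for which $1-e$ lies in both $l(e)$ and $r(e)$) kills each summand, either outright or modulo $J(R)$, forcing $e=0$ and hence $(wx)^p=0$. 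If you want to repair your proof along your own lines you would need some analogue of this idempotent: the semiperiodic structure theory is what converts the relation $a^p-a^q\in N(R)$ into something that interacts with the nilpotency of $w$, and working purely with polynomial identities in $a$ does not suffice.
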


\begin{proof}
	Suppose $w~(\neq 0),~x\in R$ and $w^k=0$ for some positive integer $k$. If $wx\in J(R)$, then $wx$ is right quasi-regular. If $wx\in C(R)$, then $wx$ is nilpotent, and so $wx$ is right quasi-regular. Assume that $wx\notin J(R)\cup C(R)$. Then by \cite[Lemma 2.3(iii)]{spdic}, there exist a positive integer $p$ and $e\in E(R)$ such that $(wx)^p=(wx)^pe$ and $e=wy$ for some $y\in R$. Observe that $e=wy=ewy=ew(1-e)y+ewey=ew(1-e)y+ew^2y^2=...=\sum\limits_{i=1}^{k-1}ew^i(1-e)y^i+ew^ky^k=\sum\limits_{i=1}^{k-1}ew^i(1-e)y^i$. Since $R$ is pointwise semicommutative, $r(e)$ or $l(e)$ is an ideal of $R$. If $r(e)$ is an ideal of the $R$, then $ew^i(1-e)y^i=0$ for all $i$ and hence $e=0$. If $l(e)$ is an ideal of $R$ then $(1-e)re=0$ for any $r\in R$ and hence $ew^i(1-e)s\in N(R)$ for all $i$ and $s\in R$, that is, $ew^i(1-e)\in J(R)$. Hence $e=\sum\limits_{i=1}^{k-1}ew^i(1-e)y^i\in J(R)$, that is, $e=0$. Consequently, we obtain $(wx)^p=0$ and so $wx$ is  right quasi-regular. Thus, $w\in J(R)$.
\end{proof}

\begin{prop}
	Let $R$ be a pointwise semicommutative ring. If $R$ is semiperiodic, then:
	\begin{enumerate}
		\item $R/J(R)$ is commutative.
		\item $R$ is $NI$.
		\item $R$ is commutative whenever $J(R)\neq N(R)$.
	\end{enumerate}
\end{prop}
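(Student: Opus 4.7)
The backbone is Lemma \ref{nsjr}, giving $N(R)\subseteq J(R)$. I will establish (1) first, then (3), and deduce (2) as a corollary.

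For \textbf{(1)}, the aim is to show $\bar R:=R/J(R)$ is commutative. Each $\bar w\in\bar R$ is either central (when $w\in J(R)\cup C(R)$, noting $w\in J(R)$ forces $\bar w=0$) or, by semiperiodicity, satisfies $\bar w^q=\bar w^p$ for positive integers $p<q$ of opposite parity (since $w^q-w^p\in N(R)\subseteq J(R)$). The plan is to decompose the semiprimitive ring $\bar R$ as a subdirect product of primitive rings $\bar R_i$. Each $\bar R_i$ inherits pointwise semicommutativity, and primitivity implies primeness, so by the earlier proposition on prime pointwise semicommutative rings each $\bar R_i$ is a domain. In a domain, $\bar w^q=\bar w^p$ with $q>p$ forces $\bar w^{q-p}=1$ for every nonzero $\bar w$, and a Jacobson-style commutativity theorem then makes each $\bar R_i$ a (commutative) field, hence $\bar R$ is commutative.

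For \textbf{(3)}, assume $J(R)\ne N(R)$ and fix $a\in J(R)\setminus N(R)$ (which exists since $N(R)\subsetneq J(R)$). For any $x,y\in R$ I must show $xy=yx$; part (1) already gives $c:=xy-yx\in J(R)$. The strategy is to apply the Lemma \ref{nsjr}-style case analysis to $c$: if $c\in C(R)\cup N(R)$ one uses the nilpotent/central case (as in Lemma \ref{nsjr}) to force $c=0$; otherwise $c\in J(R)\setminus(C(R)\cup N(R))$, and one considers elements like $c+a$ or $ca$ whose position outside $J(R)\cup C(R)$ can be arranged using the non-nilpotence of $a$, thereby triggering the semiperiodic relation. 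The pointwise semicommutative hypothesis --- specifically, the ideal structure of $l(a)$ or $r(a)$ --- is then used to cancel $a$-factors and extract $c=0$.

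\textbf{(2)} follows by a dichotomy. If $J(R)=N(R)$, then $N(R)$ equals the ideal $J(R)$; if $J(R)\ne N(R)$, then (3) yields $R$ commutative, so $N(R)$ is the nilradical of a commutative ring and an ideal. I anticipate the main obstacle to lie in (3): constructing the right auxiliary elements from $a$ and the commutator $c$ so that the semiperiodic relation truly applies, and then extracting $c=0$ through careful annihilator-ideal bookkeeping, is the technically delicate step; (1) is comparatively routine once the subdirect decomposition is in place.
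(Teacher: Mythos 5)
Your proposal has two genuine gaps. In (1), the decisive step is the claim that each primitive subdirect factor $\bar R_i$ of $R/J(R)$ ``inherits pointwise semicommutativity,'' so that the earlier proposition (prime $+$ pointwise semicommutative $\Rightarrow$ domain) applies. Nothing in the paper, and nothing in general, guarantees that a quotient of a pointwise semicommutative ring is pointwise semicommutative (the class is not closed under homomorphic images, just as semicommutative rings are not), so the factors cannot be assumed to be domains and the Jacobson--Herstein argument does not get off the ground. The paper instead works inside $R$ itself: it shows $R/J(R)$ is \emph{reduced} by combining $J(R)\subseteq N(R)\cup C(R)$ (Lemma 2.6 of \cite{spdic}) with the idempotent $e=wy$, $w^p=w^pe$ produced by Lemma 2.3(iii) of \cite{spdic}, using pointwise semicommutativity of $R$ to force $e\in J(R)$, hence $e=0$; commutativity of $R/J(R)$ then comes from Theorem 4.4 of \cite{spdic}.

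In (3) the plan cannot work as stated: your auxiliary elements $c+a$ and $ca$ (with $c=xy-yx\in J(R)$ and $a\in J(R)$) lie in $J(R)$, so they are never outside $J(R)\cup C(R)$ and the semiperiodic relation never applies to them. The missing ingredient is again Lemma 2.6 of \cite{spdic}: $J(R)=(J(R)\cap N(R))\cup(J(R)\cap C(R))$ exhibits $J(R)$ as a union of two additive subgroups, hence $J(R)$ equals one of them; since $N(R)\subseteq J(R)$ (Lemma \ref{nsjr}) and $J(R)\neq N(R)$, one gets $J(R)\subseteq C(R)$, after which every non-central $w$ satisfies the semiperiodic relation and $w-w^{p-q+1}\in N(R)\subseteq J(R)\subseteq C(R)$, so Herstein's theorem finishes. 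Your deduction of (2) from (3) by the dichotomy $J(R)=N(R)$ versus $J(R)\neq N(R)$ is a legitimate shortcut (the paper proves (2) directly from Lemma 2.6), but it is contingent on (3), which as proposed is not established.
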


\begin{proof}
	\begin{enumerate}
		
		\item \label{sni}  By Lemma \ref{nsjr}, $N(R)\subseteq J(R)$. Write $\bar{R}=R/J(R)$ and let $\bar{w}\in \bar{R}$ with $\bar{w}^2=0$. Then by \cite[Lemma 2.6]{spdic}, $w^2\in J(R)\subseteq N(R)\cup C(R)$. If $w^2\in N(R)$, then $w\in N(R)\subseteq J(R)$ ( see Lemma \ref{nsjr}), that is, $\bar{w}=0$. Suppose $w^2\notin  N(R)$, then $w^2\in C(R)$. If $w\in Z(R)$, then $\bar{w}\bar{R}\bar{w}=0$. Since $\bar{R}$ is semiprime, $\bar{w}=0$. Assume, if possible, that $\bar{w}\notin C(\bar{R})$ then $w\notin J(R)\cup C(R)$. By \cite[Lemma 2.3(iii)]{spdic}, there exist a positive integer $p$ and $e\in E(R)$ such that $w^p=w^pe$ and $e=wy$ for some $y\in R$. Hence $e=ewy=ew(1-e)y+ewey=ew(1-e)y+ew^2y^2$. Since $R$ is pointwise semicommutative,  $e\in J(R)$, that is, $e=0$. This yields that $w^p=0$ and so $w\in N(R)\subseteq J(R)$, a contradiction. Therefore $\bar{w}\in C(\bar{R})$ and so $\bar{w}=0$. Thus, $\bar{R}$ is reduced. Since $R$ is semiperiodic, $\bar{R}$ is commutative (by \cite[Theorem 4.4]{spdic}).
		
		\item \label{k} Let $w,~h\in N(R)$ and $x\in R$. By Lemma \ref{nsjr}, $w-h,~wx\in J(R)$. By \cite[Lemma 2.6]{spdic}, $w-h,~wx\in N(R)\cup C(R)$. If $w-h,~wx\in N(R)$, then nothing to prove. Suppose $w-h,~wx\in C(R)$. Now, observe that $(w-h)w=w(w-h)$ and $(wx)^m=w^mx^m$ for all integer $m\geq 1$. This implies that $wh=hw$ and hence $w-h,~wx\in N(R)$. Hence $N(R)$ is an ideal.
		
		\item By \cite[Lemma 2.6]{spdic}, $J(R)=(J(R)\cap N(R))\cup (J(R)\cap C(R))$. Note that $R$ is NI (by \ref{k}), and $J(R)\cap N(R)$ and $J(R)\cap C(R)$ are additive subgroups of $R$, so $J(R)=J(R)\cap N(R)$ or $J(R)\cap C(R)$. This yields that $J(R)\subseteq N(R)$ or $J(R)\subseteq C(R)$. By hypothesis and Lemma \ref{nsjr}, $J(R)\subseteq C(R)$. Let $w\in R$. Suppose $w\notin C(R)$. Then $w\notin J(R)\cup C(R)$. Then there exist positive integers $p,q~(p\geq q)$ of opposite parity such that $w^p-w^q\in N(R)$. So $(w^p-w^q)^k=0$ for some $k\geq 1$. Then $((w-w^{p-q+1}))^k=0$, this gives $w-w^{p-q+1}\in N(R)\subseteq J(R)\subseteq C(R)$. By Herstein's Theorem \cite{spdic}, $R$ is commutative.
	\end{enumerate}
\end{proof}

The following examples show that the skew polynomial ring and the polynomial ring over a pointwise semicommutative ring need not be pointwise semicommutative.
\begin{example}

	\begin{enumerate}
		
		\item Let $D$ be a division ring and $R=D\bigoplus D$ with componentwise multiplication. Clearly, $R$ is reduced, so $R$ is pointwise semicommutative. Define $\sigma(h,w)=(w,h)$. Then $\sigma$ is an automorphism of $R$. Let $f(x)=(1,0)x\in R[x;\sigma]$. Observe that $f(x)^2=0$ but $f(x)xf(x)\neq 0$. Hence $R[x;\sigma]$ is not pointwise commutative. 
		
		\item Take $\mathbb{Z}_2$ as the field of integers modulo 2 and let $A=\mathbb{Z}_2[a_0,a_1,a_2,b_0, \\ b_1,b_2,c]$ be the free algebra of polynomials with zero constant terms in noncommuting indeterminates $a_0,a_1,a_2,b_0,b_1,b_2$ and $c$ over $\mathbb{Z}_2.$ Take an ideal $I$ of the ring $\mathbb{Z}_2+A$ generated by $a_0b_0, a_0b_1+a_1b_0, a_0b_2+a_1b_1+a_2b_0, a_1b_2+a_2b_1, a_1b_1,a_0rb_0, a_2rb_2, b_0a_0, b_0a_1+b_1a_0, b_0a_2+b_1a_1+b_2a_0, b_1a_2+b_2a_1,b_2a_2,b_0ra_0,b_2ra_2, (a_0+a_1+a_2)r(b_0+b_1+b_2),(b_0+b_1+b_2)r(a_0+a_1+a_2) ~and~ r_1r_2r_3r_4$ where $r,r_1,r_2,r_3,r_4\in A$. Take $R=(\mathbb{Z}_2+A)/I$. Then we have $R[x]\cong (\mathbb{Z}_2+A)[x]/I[x]$. By \cite[Example 2.1]{ext}, $R$ is reversible and hence pointwise semicommutative.  
		Observe that $(b_0+b_1x+b_2x^2)(a_0+a_1x+a_2x^2)\in I[x]$. But $(b_0+b_1x+b_2x^2)c(a_0+a_1x+a_2x^2)\notin I[x]$, since $b_0ca_1+b_1ca_0\notin I$. Hence $l(\overline{(a_0+a_1x+a_2x^2)})$ is not an ideal of $R[x]$. Again,  $(a_0+a_1x+a_2x^2)(b_0+b_1x+b_2x^2)\in I[x]$. But  $(a_0+a_1x+a_2x^2)c(b_0+b_1x+b_2x^2)\notin I[x]$, since $a_0cb_1+a_1cb_0\notin I$. Thus, $r(\overline{(a_0+a_1x+a_2x^2)})$ is also not an ideal of $R[x]$. Therefore, $R[x]$ is not pointwise semicommutative. 
	\end{enumerate}
	
\end{example}
\begin{lemma}\label{loc}
	Let $R$ be a ring and $\Delta $ be a multiplicatively closed subset of $R$ consisting of central non-zero divisors. For any $u^{-1}a \in  \Delta ^{-1} R$, $l(u^{-1}a)$ is an ideal of $\Delta ^{-1} R$ if and only if $l(a)$ is an ideal of $R$ and $r(u^{-1}a)$ is an ideal of $\Delta ^{-1} R$ if and only if $r(a)$ is an ideal of $R$.
\end{lemma}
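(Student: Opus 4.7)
The plan is to reduce the statement to the case $u=1$ using centrality and then transfer the ideal property across the localization map using that $\Delta$ consists of non-zero divisors. First I would note that for any $u\in \Delta$, the image of $u$ in $\Delta^{-1}R$ is a central unit, so $u^{-1}$ is central in $\Delta^{-1}R$ and multiplication by $u^{-1}$ is a bijection that commutes with every element. Hence $l_{\Delta^{-1}R}(u^{-1}a)=l_{\Delta^{-1}R}(a)$, and likewise $r_{\Delta^{-1}R}(u^{-1}a)=r_{\Delta^{-1}R}(a)$, where $a$ is viewed inside $\Delta^{-1}R$ via the canonical embedding $R\hookrightarrow \Delta^{-1}R$ (which is injective because $\Delta$ consists of non-zero divisors). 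This reduces the whole problem to relating $l_{\Delta^{-1}R}(a)$ with $l_R(a)$, and similarly for $r$.

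Next I would describe the annihilator explicitly. An element $v^{-1}b\in \Delta^{-1}R$ satisfies $(v^{-1}b)a=v^{-1}(ba)=0$ iff $s\cdot ba=0$ in $R$ for some $s\in \Delta$; since $s$ is a non-zero divisor, this is equivalent to $ba=0$, i.e.\ $b\in l_R(a)$. Thus $l_{\Delta^{-1}R}(a)=\{v^{-1}b : v\in \Delta,\; b\in l_R(a)\}$, which I denote $\Delta^{-1}l_R(a)$. A symmetric computation (using centrality of $\Delta$) gives $r_{\Delta^{-1}R}(a)=\Delta^{-1}r_R(a)$.

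It remains to prove that $\Delta^{-1}l_R(a)$ is an ideal of $\Delta^{-1}R$ iff $l_R(a)$ is an ideal of $R$, and analogously for $r$. The forward direction is routine: for $v^{-1}b\in \Delta^{-1}l_R(a)$ and $w^{-1}c\in \Delta^{-1}R$, the products $(w^{-1}c)(v^{-1}b)=(wv)^{-1}cb$ and $(v^{-1}b)(w^{-1}c)=(vw)^{-1}bc$ both lie in $\Delta^{-1}l_R(a)$ since $cb, bc\in l_R(a)$ whenever $l_R(a)$ is a two-sided ideal. Conversely, suppose $\Delta^{-1}l_R(a)$ is an ideal. Given $x\in l_R(a)$ and $r\in R$, the element $1^{-1}(xr)=(1^{-1}x)(1^{-1}r)$ must lie in $\Delta^{-1}l_R(a)$, so $1^{-1}(xr)=v^{-1}b$ for some $v\in\Delta$, $b\in l_R(a)$. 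Using the localization equality and the non-zero divisor property of $\Delta$, this forces $v\cdot xr=b$, hence $vxra=ba=0$, and cancelling the non-zero divisor $v$ yields $xra=0$, i.e.\ $xr\in l_R(a)$. Since $l_R(a)$ is automatically a left ideal, it is an ideal of $R$. The argument for right annihilators is entirely symmetric.

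The main work is essentially bookkeeping; the only delicate point is keeping track of the equivalence relation defining $\Delta^{-1}R$ and repeatedly invoking the non-zero-divisor hypothesis to cancel elements of $\Delta$ both when showing $l_{\Delta^{-1}R}(a)=\Delta^{-1}l_R(a)$ and in the converse implication. No deeper structural obstacle arises, precisely because centrality of $\Delta$ lets left/right considerations travel freely through the localization.
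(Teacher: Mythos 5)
Your proposal is correct. The paper offers no proof of this lemma (it is dismissed as ``Easy to prove''), and your argument --- reducing to $u=1$ via centrality of units of $\Delta$, identifying $l_{\Delta^{-1}R}(a)$ with $\Delta^{-1}l_R(a)$ using the non-zero-divisor hypothesis, and transferring closure under right (resp.\ left) multiplication back and forth through the injective canonical map --- is exactly the standard verification the authors presumably intended.
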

\begin{proof}
	Easy to prove.
\end{proof}
\begin{prop}
	Let $R$ be a ring and $\Delta$ be a multiplicatively closed subset of $R$ consisting of central non-zero divisors. Then $R$ is pointwise semicommutative if and only if $\Delta ^{-1} R$ is pointwise semicommutative.
\end{prop}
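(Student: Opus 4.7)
The plan is to deduce the proposition as an immediate consequence of Lemma \ref{loc}, since that lemma already translates the ``annihilator is an ideal'' property across the localization map in both directions. No further structural arguments about $\Delta^{-1}R$ should be needed; the work is entirely bookkeeping.

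For the forward implication, suppose $R$ is pointwise semicommutative and pick an arbitrary element $u^{-1}a \in \Delta^{-1}R$. By hypothesis applied to $a \in R$, either $l(a)$ or $r(a)$ is an ideal of $R$. Lemma \ref{loc} then says that the corresponding one of $l(u^{-1}a)$ or $r(u^{-1}a)$ is an ideal of $\Delta^{-1}R$. Since $u^{-1}a$ was arbitrary, $\Delta^{-1}R$ is pointwise semicommutative.

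For the converse, suppose $\Delta^{-1}R$ is pointwise semicommutative and let $a \in R$. Fix any $u \in \Delta$ and view the element $u^{-1}(ua) \in \Delta^{-1}R$. By hypothesis, either $l(u^{-1}(ua))$ or $r(u^{-1}(ua))$ is an ideal of $\Delta^{-1}R$, and Lemma \ref{loc} turns this into the statement that either $l(ua)$ or $r(ua)$ is an ideal of $R$. The final small observation is that $l(a) = l(ua)$ and $r(a) = r(ua)$: indeed, if $x(ua) = 0$ then centrality of $u$ gives $(xa)u = 0$, and since $u$ is a non-zero divisor $xa = 0$; the reverse inclusion is trivial, and the right-annihilator case is symmetric. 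Hence either $l(a)$ or $r(a)$ is an ideal of $R$, so $R$ is pointwise semicommutative.

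The only potential obstacle is verifying that the lemma's annihilator correspondence really does let one choose the left-or-right alternative element by element (rather than globally), but this is automatic because Lemma \ref{loc} is stated pointwise: it matches $l$ with $l$ and $r$ with $r$ on a single element, so the ``either/or'' choice available in $R$ transfers unchanged to $\Delta^{-1}R$ and vice versa.
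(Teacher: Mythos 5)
Your proposal is correct and follows the same route as the paper: the forward direction is verbatim the paper's argument via Lemma \ref{loc}, and your converse simply spells out the step the paper dismisses as trivial (identifying $l(a)$ with $l(ua)$ using that $u$ is a central non-zero divisor). Nothing is missing.
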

\begin{proof}
	Suppose $R$ is pointwise semicommutative. Let $u^{-1}a\in \Delta^{-1} R$. Then, either $l(a)$  or $r(a)$ is an ideal of $R$. By the Lemma \ref{loc}, either $l(u^{-1}a)$ or $r(u^{-1}a)$ is an ideal of $ \Delta^{-1} R$. Thus,  $\Delta^{-1} R$ is pointwise semicommutative. Observe that the converse is trivial. 
\end{proof}
\begin{cor}
	$R[x]$ is pointwise semicommutative if and only if $R[x,x^{-1}]$ is so.
\end{cor}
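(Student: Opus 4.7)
The plan is to obtain this corollary as an immediate specialization of the preceding proposition. The key observation is that the Laurent polynomial ring $R[x,x^{-1}]$ is literally the localization of $R[x]$ at the multiplicatively closed set $\Delta = \{1, x, x^2, x^3, \ldots\}$, so I would set this up and then invoke the proposition verbatim.

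First I would verify the three hypotheses required to apply the previous proposition with the ring $R[x]$ in place of $R$ and $\Delta = \{x^n : n \geq 0\}$. Namely: $\Delta$ is multiplicatively closed (clear since $x^m \cdot x^n = x^{m+n}$); every element of $\Delta$ is central in $R[x]$ (clear since the polynomial variable $x$ commutes with every element of $R[x]$); and every element of $\Delta$ is a non-zero divisor in $R[x]$ (clear since if $f(x) = \sum a_i x^i \in R[x]$ satisfies $x^n f(x) = 0$ or $f(x) x^n = 0$, comparing coefficients forces all $a_i = 0$).

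With these checks in hand, the previous proposition yields that $R[x]$ is pointwise semicommutative if and only if $\Delta^{-1} R[x]$ is pointwise semicommutative. Finally I would identify $\Delta^{-1} R[x]$ with $R[x,x^{-1}]$ in the standard way: every element of $R[x,x^{-1}]$ can be written as $x^{-n} f(x)$ for some $n \geq 0$ and $f(x) \in R[x]$, and conversely every such fraction lies in $R[x,x^{-1}]$, giving the canonical ring isomorphism $\Delta^{-1} R[x] \cong R[x,x^{-1}]$.

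There is no real obstacle here — the entire content of the corollary is the recognition that $R[x,x^{-1}]$ is a localization of $R[x]$ at a centrally generated, regular multiplicatively closed set, so the proof is essentially a one-line application of the preceding proposition once the setup is written down.
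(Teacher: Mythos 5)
Your proposal is correct and is exactly the paper's argument: the paper likewise observes that $R[x,x^{-1}]=\Delta^{-1}R[x]$ with $\Delta=\{1,x,x^2,\dots\}$ and invokes the preceding proposition. Your explicit verification that $\Delta$ consists of central non-zero divisors is a welcome bit of extra care that the paper leaves implicit.
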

\begin{proof}
	Note that $R[x,x^{-1}]=\Delta ^{-1}R[x]$, where $\Delta =\{1,x,x^2,...\}$. Hence the result follows.
\end{proof}

A left $R$-module $M$ is said to be $Wnil-injective$ if for any $w~(\neq 0)\in N(R)$, there exists a positive integer $m$ such that $w^m\neq 0$ and any  $R$-homomorphism $\Psi:Rw^m\rightarrow M$ extends to one from $R$ to $M$. In order to probe some properties of pointwise semicommutative rings, we investigate Wnil-injective modules over a pointwise semicommutative ring in the following. 

\begin{prop}
	Let $R$ be a pointwise semicommutative ring, and every simple singular left $R$-module is Wnil-injective then: 
	\begin{enumerate}
		\item \label{non} $R$ is left non-singular.
		
		\item  $R$ is left weakly regular whenever $R$ is left MC2.
		
		\item R is reduced if any $e\in E(R)$, $er=ere$ for all $r\in R$.
		
	\end{enumerate}
\end{prop}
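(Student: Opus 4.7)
For part (1), I argue by contradiction. Assume $Z(_RR)\ne 0$ and pick $0\ne a\in Z(_RR)$, so $l(a)$ is essential. From $l(a)\cap Ra\ne 0$ one obtains $0\ne ra$ with $ra^2=0$. Pointwise semicommutativity on $a$ (when $l(a)$ is the ideal) forces $raR\subseteq l(a)$, whence $(ra)^2=(rar)a=0$; in the remaining case one iterates the same argument on the powers $a^{2^k}$ and applies pointwise semicommutativity to each, eventually producing a nonzero element $v\in Z(_RR)\cap N(R)$ with $v^2=0$ and $l(v)$ essential (the iteration must terminate: either some $l(a^{2^j})$ is an ideal, or $a$ itself is nilpotent and $v=a^{n-1}$ works). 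Now pick a maximal left ideal $M\supseteq l(v)$; since $l(v)$ is essential, $M$ is essential, so $R/M$ is simple singular and Wnil-injective by hypothesis. The map $\psi\colon Rv\to R/M$, $rv\mapsto r+M$, is well-defined, and (by Wnil-injectivity applied to $v$ with $m=1$) extends to some $\tilde\psi\colon R\to R/M$ with $\tilde\psi(1)=c+M$, giving $1-vc\in M$. But $v^2=0$ combined with pointwise semicommutativity gives $vRv=0$, so $RvR$ is a nilpotent ideal contained in $J(R)$; therefore $vc\in J(R)$ and $1-vc$ is a unit—contradicting $M\ne R$. Hence $Z(_RR)=0$.

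For part (2), let $w\in R$ with $w\notin RwRw$ and use Zorn to pick a maximal left ideal $M\supseteq RwRw$ with $w\notin M$. If $M$ is not essential, then $R=L\oplus M$ for some nonzero left ideal $L$; $L\cong R/M$ is simple, so $L=Re_0$ with $e_0\in ME_l(R)\cap E(R)$. By Proposition~\ref{dlm}, $e_0$ is left semicentral, so $M=R(1-e_0)$ is a two-sided ideal. The left submodule $Rwe_0\subseteq Re_0$ equals $0$ or $Re_0$. If it is $0$, then $we_0=0$, forcing $w\in M$, a contradiction. If it equals $Re_0$, then $e_0=rwe_0$ for some $r$, so $e_0w=rwe_0w\in RwRw\subseteq M$; since $M=R(1-e_0)$, the $Re_0$-component $e_0we_0$ of $e_0w$ vanishes, and by left semicentrality $we_0=e_0we_0=0$, again forcing $w\in M$. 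In the essential case, $R/M$ is Wnil-injective; here MC2 combined with pointwise semicommutativity on $w$ produces a nilpotent element $v$ with $v^2=0$, $l(v)\subseteq M$, and $Rv\subseteq RwRw$, and the extension argument as in (1) then yields $w\in M$—again the contradiction. Hence $w\in RwRw$.

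For part (3), the hypothesis $er=ere$ for all $e\in E(R)$, $r\in R$ says every idempotent is left semicentral; applying it also to the idempotent $1-e$ gives $er=ere$ on the other side, so every idempotent of $R$ is central. Suppose $R$ is not reduced and pick $0\ne w$ with $w^2=0$. Pointwise semicommutativity yields $wRw=0$, and part (1) gives $R$ left non-singular, so $l(w)$ is not essential. Pick a maximal left ideal $M\supseteq l(w)$: if $M$ is essential, the argument of (1) applies directly to $w$, producing $1-wc\in M$ with $wc\in J(R)$ and giving the contradiction. If $M$ is not essential, $R=Re_0\oplus M$ for a central $e_0\in E(R)$, with $M=R(1-e_0)$; since $w\in l(w)\subseteq M$, we have $we_0=0$, but $e_0\in Re_0$ and $Re_0\cap l(w)\subseteq Re_0\cap M=0$ force $we_0\ne 0$—a contradiction. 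Therefore $R$ is reduced.

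The principal obstacle throughout is the handling of the non-essential case, where simple-singular Wnil-injectivity provides no leverage and one must instead invoke the splitting $R=Re_0\oplus M$ together with left min-abel, MC2, and (for (3)) centrality of idempotents. The single most delicate step is the iterative construction in (1) of a nonzero nilpotent $v\in Z(_RR)$ of index $2$, which requires repeatedly applying pointwise semicommutativity to successive powers of $a$ until the annihilator structure cooperates.
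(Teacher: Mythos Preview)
Your argument for part~(3) is correct and close in spirit to the paper's, though you first deduce that all idempotents are central whereas the paper works directly with the hypothesis $er=ere$ (from $we=0$ it gets $ew=ewe=0$, hence $e\in l(w)\subseteq M=l(e)$, contradiction). For part~(1), your final extension/contradiction step matches the paper's, but your preliminary reduction to a square-zero element of $Z(_RR)$ via iteration on $a^{2^k}$ is both unnecessary and unjustified. The existence of $0\neq w\in Z(_RR)$ with $w^2=0$ is a general fact needing no hypothesis on $R$: if $Z(_RR)$ contained no nonzero square-zero element, pick $0\neq a\in Z(_RR)$ and $0\neq b=ra\in l(a)\cap Ra$; then $(ab)^2=0$ forces $ab=0$, whence $(bxa)^2=0$ forces $bRa=0$, and finally $b^2=b(ra)=(br)a\in bRa=0$, a contradiction. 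Your assertion that ``either some $l(a^{2^j})$ is an ideal, or $a$ is nilpotent'' is never argued: pointwise semicommutativity is perfectly compatible with $a$ being non-nilpotent and only $r(a^{2^j})$ being an ideal for every $j$, so your iteration need not terminate.

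Part~(2) contains two genuine gaps. First, the left ideal $M$ you obtain by Zorn (maximal among those containing $RwRw$ and omitting $w$) is in general \emph{not} a maximal left ideal of $R$; already in $R=\mathbb{Z}$ with $w=4$ one gets $M=8\mathbb{Z}$, and no maximal ideal of $\mathbb{Z}$ contains $16\mathbb{Z}$ while omitting $4$. Thus the subsequent claims that $R/M$ is simple and that $M$ splits off a minimal idempotent summand collapse. The paper sidesteps this by enclosing $RwR+l(w)$ in a genuine maximal left ideal (note that in the paper's setup $w\in M$, so the contradiction sought is $1\in M$, not $w\in M$). Second, and more seriously, in the essential case you simply assert that ``MC2 combined with pointwise semicommutativity on $w$ produces a nilpotent $v$ with $v^2=0$, $l(v)\subseteq M$, $Rv\subseteq RwRw$'' with no construction whatsoever; it is entirely unclear how MC2, a condition on minimal idempotents, could manufacture such a nilpotent from an arbitrary $w$. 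The paper's own treatment of the essential case is the terse ``as in the proof of~(1)'', which in~(1) relied on $w$ being nilpotent to invoke Wnil-injectivity; since $w$ is arbitrary here, that step is not self-evident in either argument and you should not regard it as routine.
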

\begin{proof} 
	\begin{enumerate}
		\item Assume that $Z(_RR)\neq 0$. Then there exists $w~(\neq 0)\in Z(_RR)$ such that $w^2=0$. So, $l(w)\subseteq M$ for some maximal left ideal $M$ of $R$. Since $w\in Z(_RR)$, $M$ is essential. Now, define an $R$-homomorphism $\Psi:Rw\rightarrow R/M$ via. $\Psi(rw)=r+M$.  By hypothesis, $R/M$ is Wnil-injective and  so there exists $h\in R$ with $1-wh\in M$. Since $R$ is pointwise semicommutative and $w^2=0$, $whw=0$, that is, $wh\in l(w)$, which further implies that $1\in M$, a contradiction. Therefore $Z(_RR)=0$.

		\item  Suppose there is an element $w\in R$ such that $RwR+l(w)\neq R$. So, $RwR+l(w)\subseteq M$ for some maximal left ideal $M$ of $R$. If $M$ is not essential in $_RR$, then $M=Re=l(1-e)$ for some $e\in E(R)$. As $R(1-e)\cong R/l(1-e)=R/M$ is a simple left $R$-module, $R(1-e)$ is a minimal left ideal of $R$.  By Proposition \ref{dlm} (\ref{lma}), $R$ is a left min-abel ring. Since $R$ is a left $MC2$ ring, $1-e\in Z(R)$ by \cite[Theorem 1.8]{mc2}. As $w\in RwR+l(w)\subseteq M=l(1-e),~w(1-e)=0=(1-e)w$. So $1-e\in l(w)\subseteq M=l(1-e)$, a contradiction. Therefore, $M$ is essential left ideal of $R$. Thus, $R/M$ is Wnil-injective. As in the proof of (\ref{non},) $1-wh\in M$ for some $h\in R$. Since $wh\in RwR\subseteq M$, $1\in M$, a contradiction. Therefore, $RwR+l(w)=R$ for any $w\in R$, that is, $RwRw=R$. Hence $R$ is a left weakly regular ring.
		
		\item Suppose there exists $w~(\neq 0)\in R$ satisfying $w^2=0$. Then $l(w)\subseteq M$ for some maximal left ideal $M$ of $R$. If $M$ is not essential, then $M=l(e)$ for some $0\neq e\in E(R)$. So $we=0$ and  by hypothesis, $ew=ewe=0$. This implies that $e\in l(w)\subseteq M=l(e)$, a contradiction. Hence $M$ is essential, and $R/M$ is simple singular left $R$-module.
		As in the proof of (\ref{non}), $1-wh\in M$ for some $h\in R$. Since $R$ is pointwise semicommutative and $w^2=0$, $wh\in l(w)\subseteq  M$. This implies that $1\in M$, a contradiction. Therefore, $w=0$. 
		
	\end{enumerate}
\end{proof}

$R$ is called a \textit{left} (\textit{right}) $SF$  if all simple left (right) $R$-modules are flat.  \cite[Remark 3.13]{rnsf} shows that $R$ is strongly regular whenever $R$ is a reduced left SF ring. We extend this result as follows.

\begin{prop}
	Let R be a pointwise semicommutative ring. If R is left SF, then R is strongly regular.
\end{prop}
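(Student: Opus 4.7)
The plan is to reduce the problem to the already-known reduced case: \cite[Remark 3.13]{rnsf} tells us that a reduced left SF ring is strongly regular, so it suffices to show that a pointwise semicommutative left SF ring $R$ has no nonzero nilpotents.

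I would first handle square-zero nilpotents. Given $w\in R$ with $w^2=0$, the inclusion $w\in l(w)\cap r(w)$ combined with pointwise semicommutativity forces $wRw=0$: if $l(w)$ is an ideal then $wR\subseteq l(w)$, while if $r(w)$ is an ideal then $Rw\subseteq r(w)$. This is the only place the hypothesis on $R$ enters.

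I would then split on the left ideal $J=Rw+r(w)$. If $J=R$, then writing $1=aw+b$ with $a\in R$ and $b\in r(w)$ and multiplying on the left by $w$ gives $w=waw+wb=0$, the two summands vanishing by $wRw=0$ and $b\in r(w)$ respectively. If $J\subsetneq R$, I enclose $J$ in a maximal left ideal $M$; then the simple left module $R/M$ is flat by the left SF hypothesis, and the standard flatness criterion (for each $x$ in a left ideal $L$ with $R/L$ flat there exists $y\in L$ with $x=xy$) applied to $w\in M$ produces $y\in M$ with $w=wy$. But then $1-y\in r(w)\subseteq J\subseteq M$, forcing $1\in M$, a contradiction. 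Therefore every square-zero element of $R$ is zero.

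A standard half-power argument then extends this to all nilpotents: if $w^n=0$ is the minimal such $n\geq 2$, then $w^{\lceil n/2\rceil}$ has square zero (since $2\lceil n/2\rceil\geq n$) and is nonzero (for $n=2$ this element is $w$ itself; for $n\geq 3$ the exponent $\lceil n/2\rceil$ is strictly below $n$, so minimality of $n$ applies), contradicting the previous step. Hence $R$ is reduced, and \cite[Remark 3.13]{rnsf} finishes the argument. The main obstacle is choosing the correct auxiliary left ideal, namely $J=Rw+r(w)$ rather than $Rw+l(w)$: this choice is dictated by the form of the flatness criterion for $R/M$, in which the witness $y$ multiplies $w$ on the right and thus lands the relevant element in $r(w)$. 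Pointwise semicommutativity is used only to secure $wRw=0$, which is precisely what makes the case $J=R$ collapse.
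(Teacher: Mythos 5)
Your reduction to the reduced case is a genuinely different (and more ambitious) route than the paper's, which only proves that $R/J(R)$ is reduced and then climbs back up via quasi-duo-ness and \cite[Theorem 4.10]{rnsf}. Unfortunately your key case split does not go through as written. The set $J=Rw+r(w)$ is not a left ideal in general: $r(w)$ is always a right ideal, but it is closed under left multiplication only when it happens to be two-sided, and pointwise semicommutativity only guarantees that \emph{one} of $l(w)$, $r(w)$ is an ideal. When it is $l(w)$ that is the ideal, you still get $wRw=0$ (hence $Rw\subseteq r(w)$, so in fact $J=r(w)$), but $r(w)$ may fail to be a left ideal, and then the dichotomy ``$J=R$ or $J\subseteq M$ for some maximal left ideal $M$'' is false: a proper subset that is not a left ideal can generate $R$ as a left ideal. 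Concretely, the unhandled case is $r(w)\neq R$ but $Rr(w)=R$. There you can write $1=\sum_i s_ib_i$ with $b_i\in r(w)$, and left multiplication by $w$ gives $w=\sum_i ws_ib_i$, which does not vanish: the terms $ws_ib_i$ are controlled neither by $wRw=0$ nor by $wb_i=0$. So your claim that pointwise semicommutativity enters only through $wRw=0$ is exactly where the argument leaks; you are also silently using it to make $J$ a left ideal, which it does not always do.

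This is precisely the configuration the paper's proof is built to confront: it assumes $Rr(w)+J(R)=R$, multiplies the resulting identity by $w$ on the \emph{right}, and observes that the elements $t_i=s_iw$ are square-zero, hence satisfy $t_iRt_i=0$, hence lie in $J(R)$. That only yields $w\in J(R)$, not $w=0$, which is why the paper settles for proving $R/J(R)$ reduced and then needs \cite[Proposition 3.2]{rnsf}, \cite[Remark 3.13]{rnsf} and \cite[Theorem 4.10]{rnsf} to finish. To salvage your version you would either have to show directly that $Rr(w)=R$ forces $w=0$ (which seems to require the $J(R)$ detour anyway), or confine your clean argument to the case where $r(w)$ is the two-sided annihilator. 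The remaining ingredients of your write-up --- the flatness criterion producing $y\in M$ with $w=wy$, the computation in the case $J=R$, and the half-power reduction from general nilpotents to square-zero ones --- are all correct.
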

\begin{proof}
	By \cite[Proposition 3.2]{rnsf}, $R/J(R)$ is left SF. Let $w^2\in J(R)$ such that $w\notin J(R)$. Assume, if possible, $Rr(w)+J(R)=R$, then \\ $1=x+\sum\limits^{finite}r_is_i,~ x\in J(R),~ r_i\in R,~ s_i\in r(w)$. Then $w=xw+\sum\limits^{finite}r_is_iw$. Take $t_i=s_iw$. So $t_i^2=0$. Since $R$ is pointwise semicommutative, $t_iRt_i=0$. Suppose $t_i\notin J(R)$. Then $M+Rt_i=R$ for some maximal left ideal $M$ of $R$ with $t_i\notin M$. So $m+pt_i=1,~m\in M,~p\in R$. This yields that $(1-m)^2=0$, that is, $1\in R$. This is a contradiction. Therefore, $t_i\in J(R)$. This further yields that $w\in J(R)$, a contradiction. Hence $Rr(w)+J(R)\neq R$. There exist some maximal left ideal $H$ satisfying $Rr(w)+J(R)\subseteq H$. Note that $w^2\in H$. By \cite[Lemma 3.14]{rnsf}, $w^2=w^2x$ for some $x\in H$, that is, $w-wx\in r(w)\subseteq H$. So $w\in H$. Hence there exists $y\in H$ satisfying $w=wy$, that is, $1-y\in r(w)\subseteq H$. This implies that $1\in H$, a contradiction. Therefore $R/J(R)$ is reduced. Therefore by \cite[Remark 3.13]{rnsf}, $R/J(R)$ is strongly regular. This implies that $R$ is left quasi-duo, and hence by \cite[Theorem 4.10]{rnsf}, $R$ is strongly regular.
\end{proof}

\end{document}